\definecolor{myred}{RGB}{255,50,70}
\definecolor{myblack}{RGB}{0,0,0}
\theoremstyle{plain}
\newtheorem{theorem}{Theorem}[section]
\newtheorem{lemma}[theorem]{Lemma}
\newtheorem{corollary}[theorem]{Corollary}
\theoremstyle{definition}
\newtheorem{definition}[theorem]{Definition}
\newtheorem{example}[theorem]{Example}
\newtheorem{algorithm}[theorem]{Algorithm}
\theoremstyle{remark}
\newcommand{\R}{\mathbb{R}}                     % real numbers
\renewcommand{\Re}{\mathbb{R}}                  % real numbers
\newcommand{\T}{\top\hspace{-1pt}}              % transpose
\newcommand{\argmin}{\mathop{\mathrm{argmin}}}  % argmin
\newcommand{\inner}[2]{\langle{#1},{#2}\rangle} % inner product
\begin{document}  

%\articletype{ARTICLE TEMPLATE}

\title{A barrier-type method for multiobjective optimization%
  \thanks{This work was supported by Grant-in-Aid for Young Scientists
    (B) (26730012) from Japan Society for the Promotion of Science,
    and a Grant (307679/2016-0) from National Counsel of Technological
    and Scientific Development (CNPq).}
}

\author{Ellen H. Fukuda%
  \thanks{Department of Applied Mathematics and Physics, 
    Graduate School of Informatics, Kyoto University, 
    Kyoto, 606-8501, Japan (\texttt{ellen@i.kyoto-u.ac.jp}).}
  \and
  L.~M. Gra\~na Drummond% 
  \thanks{Faculty of Business and Administration, 
    Federal University of Rio de Janeiro, 
    Rio de Janeiro 22290-240, Brazil
    (\texttt{bolsigeno@gmail.com}).}
  \and 
  Fernanda M.~P. Raupp%
  \thanks{National Laboratory for Scientific Computing, 
    Rio de Janeiro 25651-075, Brazil 
    (\texttt{fernanda@lncc.br}).} 
} 

\date{March 29, 2018}  

\maketitle 

\begin{abstract}
  \noindent For solving constrained multicriteria problems, we introduce the
  multiobjective barrier method (MBM), which extends the scalar-valued
  internal penalty method. This multiobjective version of the
  classical method also requires a penalty barrier for the feasible
  set and a sequence of nonnegative penalty parameters. Differently
  from the single-valued procedure, MBM is implemented by means of an
  auxiliary ``monotonic'' real-valued mapping, which may be chosen in
  a quite large set of functions. Here, we consider problems with
  continuous objective functions, where the feasible sets are defined
  by finitely many continuous inequalities. Under mild assumptions,
  and depending on the monotonicity type of the auxiliary function, we
  establish convergence to Pareto or weak Pareto optima. Finally, we
  also propose an implementable version of MBM for seeking local
  optima and analyze its convergence to Pareto or weak Pareto
  solutions.\\

  \noindent \textbf{Keywords:} Barrier methods, multiobjective optimization,
  Pareto optimality, penalty methods.\\
\end{abstract}

%=============================================================================

\section{Introduction}       

Practical issues on different areas, such as statistics~\cite{CF98},
engineering~\cite{EKO90,SNR15}, environmental analysis~\cite{LWV92},
space exploration~\cite{Tav04}, management
science~\cite{GMNPT92,Whi98} and design~\cite{FD04} can be modeled as
constrained multicriteria minimization problems. There are many
procedures for solving these problems, including scalarization
techniques and heuristics. Among the scalarization approach, the
weighting method is possibly the most well-known. It basically
replaces the original multiobjective problem into a minimization of
some convex combination of the objectives. Its main drawback is the
fact that we do not know a priori which are the weights that do not
lead to unbounded scalar problems. Heuristics may neither guarantee
convergence. To overcome such drawbacks, extensions to the
vector-valued setting of classical real-valued methods have been
proposed in recent years~\cite{FGD14}.

In this work, we propose an extension of the \emph{internal penalty}
(or \emph{barrier-type}) method~\cite{Ber16,BSS06}, which is a
well-known technique for constrained scalar optimization problems with
continuous objective and constraints functions. As we know, the
scalar-valued method consists in sequentially minimizing, without
constraints, the objective with the addition of positive multiples of
a feasible set barrier. That is to say, one substitutes a constrained
problem by a sequence of unconstrained ones, for which we do have
efficient solving techniques. Under reasonable assumptions, the
sequence of those minimizers converges to an optimum for the original
constrained problem.

Here, we present a vector-valued version of this procedure, that we
call \emph{multiobjective barrier method} (MBM). Each iteration
requires solving an unconstrained scalar-valued problem. As in the
classical barrier method, the iterates generated are feasible, and the
penalized unconstrained objective values decreases
iteratively. Moreover, all accumulation point, if any, are optimal
solutions of the original problem. Furthermore, the convergence
conditions are generalizations of those required in the real-valued
case, and no additional hypotheses are needed. Besides the
nonincreasing convergent to zero parameter sequence of positive real
numbers and the barrier function, the vector-valued method uses an
auxiliary monotonic function (whose presence in the scalar case is
immaterial) and, depending on its type of monotonicity, the method
converges to Pareto or weak Pareto optimal solutions.

MBM shares some properties with other classical scalar-valued methods
extensions, as the steepest descent~\cite{FS00,GDS05}, the projected
gradient~\cite{GDI04,FGD11}, the Newton~\cite{FGDS09,GDRS14}, the
proximal point~\cite{BIS05}, and the external penalty
methods~\cite{FGDR16}. More precisely, all iterates of these
procedures are computed by solving scalar optimization subproblems,
and each one of them could be obtained by the application of the
corresponding scalar-valued method to a certain (a priori unknown)
linear combination of the objectives. These methods (including MBM)
seek just a single optimal point and the convergence results are
natural extensions of their scalar correlatives. Nevertheless, as
shown through numerical experiments (see, for instance,
\cite{FGDS09}), by initializing them with randomly chosen points, in
some cases, one can expect to obtain quite good approximations of the
optimal set.

The outline of this article is the following. In
Section~\ref{sec:prob}, we introduce the problem and the notion of
multiobjective barrier function, and give some examples. In
Section~\ref{sec:met}, we define the auxiliary functions, the
multicriteria barrier method, and make some comments about the
approach. In Section~\ref{sec:conv}, we analyze the convergence of
MBM, and verify that the results are extensions of the classical
single objective case. In Section~\ref{sec:app}, we exhibit a very
simple instance of the multiobjective constrained problem for which
the weighting method fails to obtain optimal solutions for an
arbitrary large family of parameters, while MBM furnishes a Pareto
optimal point for the problem. We also show an (\emph{ad hoc}) example
in which, by varying a parameter in the barrier, one can retrieve the
whole Pareto optimal set. Finally, in Section~\ref{sec:final} we draw
some conclusions and mention some possible continuations for this
work.

%=============================================================================

\section{The multicriteria problem and the internal penalty function}
\label{sec:prob}

We start this section with some notations which will be used in the
whole paper. The Euclidean inner product is written as
$\inner{\cdot}{\cdot}$. For any matrix $A$, its transpose is denoted
by $A^\T$. A vector $x \in \Re^n$ with entries $x_i \in \Re$, $i =
1,\dots,n$, is written as $(x_1,\dots,x_n)^\T$. A sequence of vectors
$y^1,y^2,\dots$ is denoted by $\{ y^k \}$. Also, we define the vector
with all ones with appropriate dimension as $e :=
(1,\dots,1)^\T$. Moreover, the gradient of a function $\zeta \colon
\Re^n \to \Re$ at $x \in \Re^n$ is denoted by $\nabla \zeta(x)$.

Now, consider $\Re^m$ endowed with the partial order induced by the
Paretian cone $\R^m_+ = \R_+ \times \dots \times \R_+$, where $\R_+ =
[0,+\infty)$, given by
\begin{displaymath}
  u \le v \quad \mbox{if} \quad u_j \le v_j \quad \mbox{for all} \quad
  j = 1,\dots,m,  
\end{displaymath} 
and with the following stronger relation:
\begin{displaymath}
  u < v \quad \mbox{if} \quad u_j < v_j \quad \mbox{for all} \quad j =
  1,\dots,m,
\end{displaymath}
with $u, v \in \R^m$. 

Given continuous functions $f \colon \R^n \to \R^m$ and $g \colon \R^n
\to \R^p$, define
\begin{displaymath}
  D := \{ x \in \R^n : g(x) \leq 0\}
\end{displaymath}
and consider the following constrained vector-valued optimization
problem:
\begin{equation}
  \label{eq:prob}
  \begin{array}{ll}
    \mbox{minimize} & f(x) \\  
    \mbox{subject to} & x \in D,
  \end{array}
\end{equation}  
understood in the Pareto or weak Pareto sense. Recall that a point
$x^* \in D$ is a {\it weak Pareto optimal} solution of the above
problem if there does not exist $x \in D$ such that $f(x) <
f(x^*)$. Also, $x^* \in D$ is a {\it Pareto optimal} solution of
\eqref{eq:prob} if there does not exist $x \in D$ such that $f(x) \le
f(x^*)$ with $f_{j_0}(x) < f_{j_0}(x^*)$ for at least one index $j_0
\in \{1,\dots,m\}$.  Clearly, a Pareto optimum is a weak Pareto
optimal solution.

From now on, we assume the existence of a Slater point, i.e., we 
suppose that 
\begin{equation}
  \label{eq:relint}
  D^o := \{x \in \R^n : g(x) < 0\} \neq \emptyset.
\end{equation} 
% Without asking the condition above, we could have $D^o \subset$
% int($D$) as a proper inclusion. Indeed, for example, if $n=m=p=1$,
% $g(t) = 0$ for $t \in [-1,1]$ and $g(t) = t^2 - 1$ for $t \notin
% [-1,1]$, then we have $D^o = \emptyset \subsetneq (-1,1) =
% \interior(D)$.
We intend to find Pareto or weak Pareto solutions of problem
\eqref{eq:prob} by means of a barrier-type method for multiobjective
optimization, which produces its iterates in~$D^o$ and such that,
under certain mild conditions, will approximate optimal points laying
outside~$D^o$.

In our setting, we define an internal penalty function
for the feasible set $D$ as follows.

\begin{definition} 
  A \emph{multiobjective barrier function} for the set $D$ is a
  continuous function $B \colon D^o \to \R^m_+$ such
  that for any sequence $\{z^k\} \subset D^o$ with $g_i(z^k) \to 0$
  for an index $i \in \{1,...,p\}$, then $\lim_{k \to \infty} B_{j_i}
  (z^k) = +\infty$ for at least one index $j_i \in \{1,\dots,m\}$.
\end{definition}

\noindent Note that, when $m = 1$, we retrieve the classical notion of
barrier function used in scalar-valued optimization.  See, for
instance \cite[Section~5.1]{Ber16} or \cite[Section~9.4]{BSS06}.

%=============================================================================

\subsection{Examples of multiobjective barrier functions}
\label{ex:barrier} 

We now give some examples of multiobjective barrier functions for 
feasible sets~$D$ given by finitely many scalar inequalities, which
are extensions of  the inverse and the logarithmic barriers for
real-valued optimization. 

\begin{example}
  \label{ex:Bar1}
  If $p \leq m$, for $x \in D^o$, we define 
  \begin{displaymath}  
    B(x) := \Big(\frac{1}{-g_1(x)},\dots,\frac{1}{-g_p(x)},0,\dots,0 \Big)^\T. 
  \end{displaymath} 
  If $p > m$, for $x \in D^o$, we can take
  \begin{displaymath}  
    B(x) := \Big[\sum_{i=1}^p \frac{1}{-g_i(x)}\Big] e,
  \end{displaymath} 
  recalling that $e = (1,\dots,1)^\T \in \Re^m$. In this case, we can
  also take, for example,
  \begin{displaymath}
    B(x) := \Big(\frac{1}{-g_1(x)},\dots,\frac{1}{-g_{m-1}(x)},
    \sum_{i=m}^p \frac{1}{-g_i(x)} \Big)^\T.
  \end{displaymath}
\end{example} 
 
\begin{example}
  \label{ex:Bar2}
  Let us consider the case in which $-\log(-g_i(x))$ replaces  $\frac{1}{-
    g_i(x)}$ in the above example. Of course, functions as 
  \begin{equation}
    \label{eq:nonneg}
    B(x) := \Big[-\sum_{i=1}^p \log(-g_i(x))\Big] e,
  \end{equation} 
  may fail to be nonnegative. For our purposes, under certain
  circumstances, the use of such $B$ is equivalent to the use of a
  positive function of the form $B - \rho e$, where $\rho \in \R$ is
  constant. Indeed, as we will see in Section~\ref{sec:met}, the
  strategy we are about to propose, which is an extension of the
  classical barrier method, basically consists of somehow minimizing
  the objective function $f$ with the addition of a positive multiple
  of the barrier $B$. Therefore, the presence of the constant~$\rho$
  does not have any kind of influence when a minimizer of this
  perturbed objective function is computed.

  For instance, if $D$ is bounded, then $B$ given in~\eqref{eq:nonneg}
  is bounded from below, say $B(x) > \rho e$ for all $x \in D^o$, for
  some constant $\rho$. So, $\bar B(x) := B(x) - \rho e > 0$ for all
  $x \in D^o$ and, for $\tau > 0$, the minimizers of $f(x) + \tau \bar
  B(x) = f(x) + \tau (B(x) - \rho e)$ are the same as those of $f(x) +
  \tau B(x)$. (To be more precises, the minimization of the penalized
  objective will be carried on in $\R$, since, as we will see in
  Subsection~\ref{sec:alg}, this perturbation of $f$ will be composed
  with a real-valued monotonic function.)
\end{example}

%=============================================================================

\section{A barrier-type method for multiobjective optimization}
\label{sec:met}

In this section, we propose a barrier-type method for the constrained
multiobjective problem~\eqref{eq:prob}. We begin by presenting some
functions that will be used in the method.

%=============================================================================

\subsection{Auxiliary functions} 
\label{sec:prelim}

Here we introduce the auxiliary functions necessary for the
multiobjective-type barrier method. Essentially, they are continuous
and have some kind of monotonic behavior. First, let us recall a
couple of well-known notions (see~\cite{Jah03}).

For a closed set $C \subset \R^m$, a function $\Phi \colon C \to \R$
is called \emph{strictly increasing} or \emph{weakly-increasing}
(\emph{$w$-increasing}) in $C$ if, for all $u, v \in C$,
\begin{displaymath}
  u < v \quad \Rightarrow \quad \Phi(u) < \Phi(v),
\end{displaymath}
and $\Phi$ is called \emph{strongly increasing}
(\emph{$s$-increasing}) in $C$ if for all $u, v \in C$,
\begin{displaymath}
  u \le v \mbox{ and } u_{j_0} < v_{j_0} \, \mbox{ for at least one } 
  j_0 \quad \Rightarrow \quad \Phi(u) < \Phi(v).
\end{displaymath}
 
%We point out that, according to \cite[Chapter 1,
%Definition 4.1]{Luc89},    
%the notions of $w$-increasing and $s$-increasing functions are
%particular cases of the concepts of increasing and strictly increasing
%functions, respectively.
%; indeed, it suffices to take there $E_1 = \R^m$,
%$E_2 = \R$, $K = \R^m_+$ and $C = [0,+\infty)$.
 %We 
%choose the $w$ and $s$ denominations only for a matter of
%clarity and, mainly, consistency with the notation used in the sequel.
 
\noindent Clearly, an $s$-increasing function is also
$w$-increasing. Note that, for $n=m=1$, $s$ and $w$-increasing
functions are both increasing.

A simple example of a $w$-increasing function, which is not
$s$-increasing, in $C = \R^m$ is $\Phi \colon \R^m \to \R$, given by
$\Phi(u) : = \max_{i=1,\dots,m} \{u_i\}$.  A general example of
$w$-increasing function which is not $s$-increasing is the following:
$\Phi(u): = \sum_{i=1}^m \phi_i(u_i)$, where $\phi_i \colon \R \to \R$
is nondecreasing for $i=1,\dots,m$ and there exists at least one index
$i_0$ such that $\phi_{i_0}$ is increasing and not all $\phi_i$'s are
increasing functions.

An example of an $s$-increasing bounded function is $\Phi \colon \R^m
\to \R$, given by $\Phi(u): = \sum_{i=1}^m \arctan(u_i)$. More
generally, $\Phi(u): = \sum_{i=1}^m \phi_i(u_i)$, where $\phi_i \colon
\R \to \R$ is increasing for $i=1,\dots,m$, is an $s$-increasing
function (not necessarily bounded). For other examples of such
auxiliary functions, we refer to~\cite[Section~3]{FGDR16}.

It is also easy to see that if the function $\Phi$ is continuous and
$w$-increasing in $C$, then we have
\begin{equation}
\label{w-ineq}
  u \le v \quad \Rightarrow \quad \Phi(u) \le \Phi(v) 
  \quad \mbox{for any } u, v \in C.
\end{equation}

% Therefore, unless $\Phi$ is a continuous $w$-increasing or
% $s$-increasing function, the following scalar-%valued
% problem
%\begin{equation}
% \label{scalprob}
%  \begin{array}{ll}
%    \mbox{minimize} & \Phi \big( f(x) \big) \\
%    \mbox{subject to} & x \in D,
%  \end{array}
%\end{equation}
%may neither be a scalar ``strict'' representation nor a ``strong''
% one of problem~\eqref{eq:prob} (see \cite[Chapter 4, Definition
% 2.1]{Luc89}).

\noindent For the sake of completeness, we state a simple result for
these type of monotonic functions which relates optimality for the
scalar-valued problem $\min_{x \in D} \Phi(f(x))$ to weak Pareto and
Pareto optimality for the vector-valued problem~\eqref{eq:prob}. Note
that the continuity (of $f$ or of $\Phi$) is not needed.

\begin{lemma} 
  \label{L1} 
  \begin{itemize}
  \item[(a)] \label{L1(i)} If $\Phi$ is a $w$-increasing function in $f(D)$
    and $x^* \in \argmin_{x \in D} \Phi \big(f(x)\big)$, then $x^*$ is
    a weak-Pareto optimal solution for problem~\eqref{eq:prob}.
  \item[(b)] \label{L1(ii)} If $\Phi$ is an $s$-increasing function in
    $f(D)$ and $x^* \in \argmin_{x \in D} \Phi \big(f(x)\big)$, then
    $x^*$ is a Pareto optimal solution for problem~\eqref{eq:prob}.
  \end{itemize}
\end{lemma}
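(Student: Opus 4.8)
The plan is to prove both parts by contraposition, using only the monotonicity definitions and the fact that $x^*$ minimizes $\Phi \circ f$ over $D$. The key observation that makes everything work cleanly is that since $x^* \in D$ and any competitor $x$ also lies in $D$, both $f(x)$ and $f(x^*)$ belong to $f(D)$, which is exactly the set on which $\Phi$ is assumed to be monotone; hence the defining implications of $w$- and $s$-increasingness apply directly to the pair $\big(f(x), f(x^*)\big)$.

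For part (a), I would assume toward a contradiction that $x^*$ is \emph{not} weak-Pareto optimal. By definition this yields some $x \in D$ with $f(x) < f(x^*)$. Since $\Phi$ is $w$-increasing on $f(D)$, the strict vector inequality forces $\Phi\big(f(x)\big) < \Phi\big(f(x^*)\big)$. But $x \in D$, so this strictly smaller value contradicts the assumption that $x^*$ attains the minimum of $\Phi \circ f$ over $D$. Therefore no such $x$ exists and $x^*$ is weak-Pareto optimal.

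For part (b), the argument is structurally identical but invokes the stronger monotonicity. Assuming $x^*$ is \emph{not} Pareto optimal produces $x \in D$ with $f(x) \le f(x^*)$ and $f_{j_0}(x) < f_{j_0}(x^*)$ for at least one index $j_0$. This is precisely the hypothesis in the definition of an $s$-increasing function, so $\Phi\big(f(x)\big) < \Phi\big(f(x^*)\big)$, again contradicting minimality of $x^*$. Hence $x^*$ is Pareto optimal.

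I do not anticipate a genuine obstacle here: both parts are one-line deductions once the correct monotonicity property is matched to the correct optimality notion (weak Pareto with $w$-increasing, Pareto with $s$-increasing). The only point requiring a little care is bookkeeping, namely verifying that the relevant function values lie in the domain $f(D)$ where monotonicity is assumed, and correctly translating the negation of (weak) Pareto optimality into the exact vector inequality that triggers the monotonicity implication. Continuity of $f$ or $\Phi$ plays no role, consistent with the remark preceding the statement.
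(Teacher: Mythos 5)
Your proof is correct. It does, however, take a different route from the paper, which gives no direct argument at all: the paper's entire proof is a citation of Lemmas~5.14 and~5.24 of \cite{Jah03}, scalarization results asserting precisely that a minimizer of a $w$-increasing (resp.\ $s$-increasing) scalarization is weakly Pareto optimal (resp.\ Pareto optimal). What you wrote is in effect the self-contained proof of those cited lemmas: negate (weak) Pareto optimality of $x^*$, obtain a competitor $x \in D$ whose image is related to $f(x^*)$ in exactly the way that triggers the $w$- (resp.\ $s$-) increasingness implication, and contradict the minimality of $\Phi \circ f$ at $x^*$. You match the monotonicity notion to the optimality notion correctly in both parts, and your bookkeeping point --- that $f(x)$ and $f(x^*)$ both lie in $f(D)$, the set on which monotonicity is assumed, so the implication is legitimately applicable --- is the one place where care is genuinely needed. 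Your observation that continuity of $f$ or $\Phi$ plays no role is consistent with the paper's remark immediately preceding the lemma. The trade-off between the two routes is purely expository: the paper buys brevity by outsourcing to the literature, while your version buys self-containedness for the cost of a few lines.
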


\begin{proof}
  The results follow from~\cite[Lemmas~5.14 and~5.24]{Jah03}.
\end{proof}

% \begin{definition}
%   A $w$-increasing continuous function $\Phi \colon \R^m \to \R$ such
%   that $u_i \le \Phi(u)$ for all $u \in \R^m$ and all $i =
%   1,\dots,m$ (property (${\cal P}$))
%   is a \emph{$w$-type function}. If $\Phi$ is an $s$-increasing that
%   satisfies (${\cal P}$), we say that $\Phi$ is a \emph{$s$-type}
%   continuous function
% \end{definition}   

% \noindent Clearly, an $s$-type function is necessarily a $w$-type
% function, because $s$-increasing functions are $w$-increasing.

%=============================================================================

\subsection{The algorithm}  
\label{sec:alg}
 
Now, we define the \emph{multicriteria barrier method} for solving
problem~\eqref{eq:prob}, an extension of the classical procedure for
scalar-valued constrained optimization.

\begin{algorithm} The multicriteria barrier method (MBM):\\
  Take $B \colon D^o \to \R^m_+$ a multiobjective barrier for $D
  \subseteq \R^n$, $\Phi \colon \R^m \to \R$ a $w$-increasing
  continuous auxiliary function, and $\{\tau_k\} \subset \R_{++} =
  (0,+\infty)$ a sequence such that $\tau_{k+1} < \tau_k$ for all $k$
  and $\tau_k \to 0$. \\
  The method is iterative and generates a sequence $\{x^k\} \subset
  \R^n$ by solving the following scalar-valued optimization problems:
  \begin{equation} 
    \label{sup_prob}
    x^k \in \argmin_{x \in D^o} \Phi \Big(f(x) + \tau_k B(x)\Big),
    \quad k = 1,2,\dots
  \end{equation}
\end{algorithm}

\noindent The strong version of the method is formally identical to
the weak one, but with a continuous $s$-increasing auxiliary function
$\Phi \colon \R^m \to \R$.

From now on, for problem \eqref{eq:prob}, understood in both the weak
Pareto or Pareto senses, we assume that
\begin{equation} 
  \label{Hypot1}
  \inf_{x \in D^o} \Phi(f(x)) = \inf_{x \in D} \Phi(f(x)) \in \R,
\end{equation}
where, according to the case we seek Pareto or weak Pareto optima,
$\Phi$ is, respectively, a continuous strongly or weakly increasing
function. Observe that, in any case, by virtue of the monotonic
behavior of $\Phi$, this condition is an extension of the classical
hypotheses needed in the scalar case (i.e., $m$ = 1), which is given
by $\inf_{x \in D^o} f(x) = \inf_{x \in D} f(x)) > -\infty$.

Let us now list some comments and observations concerning both
versions of multicriteria barrier method.

\begin{enumerate}[leftmargin=*]
  
\item In order to guarantee the existence of $x^k$ for all $k$, we
  need some additional assumptions. One possibility is to apply the
  method for any functions $f$, $\Phi$ and $B$ such that $x \mapsto
  \Phi\big(f(x) + \tau_k B(x)\big)$ is ``coercive'' in $D^o$ for
  all~$k$, so it diverges to $+\infty$ whenever the argument
  approaches more and more to the boundary of~$D$. In such case,
  Weierstrass theorem shows that $\argmin_{x \in D^o} \Phi\big(f(x) +
  \tau_k B(x)\big) \neq \emptyset$ for all $k$.

\item Observe that whenever the data $f$ and $g$ are smooth, it is
  convenient to choose $B$ and $\Phi$ as smooth functions too, so
  that the subproblems~\eqref{sup_prob} can be solved by means of
  first-order scalar methods.

\item When $m=1$, any auxiliary function $\Phi \colon \R \to \R$ (weak
  or strong) is increasing, so, in this case, the
  update~\eqref{sup_prob} generates the same sequence as the classical
  scalar-valued barrier method.

\item In the scalar-valued setting, the behavior of the barrier~$B$
  close to the border of the feasible region seems a natural
  assumption for the welldefinedness of the method. Moreover, it is
  natural to expect that the presence of the barrier forces any
  unconstrained procedure used to compute the iterates to not
  choose~$x^k$ too close to the boundary of~$D$. In the multiobjective
  case, due to the presence of $\Phi$, which may be bounded, this is
  not so obvious; however, taking an auxiliary function~$\Phi$, which
  satisfies the condition $u_i \leq \Phi(u)$ for all $u \in \R^m$ and
  all $i=1,2,\dots,m$ (see examples of~$w$ or~$s$-type functions
  in~\cite{FGDR16}), it becomes clear that one can also expect this
  kind of behavior for any unconstrained procedure used to
  compute~$x^k$.  Anyway, for any~$m$, these subroutines must not be
  completely unconstrained, because they have to guarantee the
  feasibility of all $x^k$.

\item Let $\partial D$ be the boundary of set~$D$. Since, in general,
  $\argmin_{x \in D} \Phi(f(x)) \subset \partial D$, at least in the
  scalar-valued case, it is natural to ask the parameter sequence
  $\tau_k \to 0$, in order to compensate the fact that $B(z^k) \to
  +\infty$ for sequences $\{z^k\}$ which approach $\partial D$. In the
  multiobjective case, since $\Phi$ may be bounded, this condition
  does not seem so natural; nevertheless, if, again, one chooses an
  auxiliary function $\Phi$ such that $u_i \leq \Phi(u)$ for all $u
  \in \R^m$ and all $i=1,2,\dots,m$, we see that this prescription is
  also very reasonable.

% \item In both versions of MBM, a necessary condition for the
%  welldefinedness of the whole sequence $\{x^k\}$ is the following:
 % \begin{equation} \label{inf_nec}
  %  -\infty <  \inf_{x \in D} \Phi\big(f(x)\big).
%  \end{equation}
 % Indeed, for any $\tilde x \in D$, from the fact that $P$ is a
 % penalty function for $D$, we get
 %\begin{displaymath}
  %  -\infty < \inf_{x \in D^o}\Phi\big(f(x)\big) \le
  %  \Phi\big(f(x^k)) \le \Phi\big(f(x^k) + \tau_k P(x^k)\big). 
%  \end{displaymath}
%  Condition~\eqref{inf_nec} follows.
%\[
%-\infty <  \Phi\big(f(x^k) + \rho_k P(x^k)\big) \le  \inf_{x \in D} \Phi\big(f(x)\big). 
%\] 
%So \eqref{inf_nec} is a
%necessary condition for the welldefinedness of the whole sequence $\{x^k\}$.
  
\item This method inherits some features of its real-valued
  counterpart.  Firstly, we mention a drawback of MBM, namely that it
  does not have any kind of ``memory'', i.e., the former iterate needs
  not to be used to compute the current one. However, it seems natural
  to obtain~$x^k$ by initializing the subroutine used to solve
  subproblem~\eqref{sup_prob} with the previous iterate $x^{k-1}$.
  Secondly, a benefit of applying MBM is to replace a constrained
  vector-valued problem with a sequence of unconstrained scalar-valued
  ones.

\item Note that when $\Phi(u) = \max\{u_i\}$ and $B = (\hat
  B,\dots,\hat B)^\T$, where $\hat B$ is a real-valued barrier
  function for $D$, the use of MBM to \eqref{eq:prob} reduces to the
  application of the classical scalar barrier method to problem
  $\min_{x \in D} \max_{1 \leq i \leq m} \big\{f_i(x)\big\}$. One may
  then ask why we should use MBM. An answer to this question is that,
  MBM has more degrees of freedom: we do not always need to choose
  that specific $\Phi$ and neither a barrier of the type $B = (\hat
  B,\dots,\hat B)^\T$. Moreover, the choice of other barriers may be
  very useful whenever some of the given data components are in quite
  different scales. Indeed, in order to compensate this drawback, MBM
  can be implemented with appropriate barrier components $B_i$. Also,
  as we have already mentioned in item~2, when~\eqref{eq:prob} is
  smooth, a max type auxiliary function is definitely not the best
  choice for $\Phi$, since it will produce nonsmooth subproblems.

\item As in other extensions of classical scalar methods to the
  vectorial setting, under certain regularity conditions, all iterates
  of MBM are implicitly obtained by the application of the
  corresponding real-valued algorithm to a certain weighted
  scalarization. Let us see this assertion.  Assume that $f$ and $B=
  (\hat B,\dots,\hat B)^\T$ are differentiable $\R^m_+$-convex
  functions (i.e., $f_j$ and $B_j = \hat B$ are convex for all $j$),
  with $\hat B$ a scalar-valued penalty for $D$. Let $\Phi$ be defined
  by $\Phi(u) = \max_{i=1,\dots,m} \{ u_i \}$ for all $u \in \R^m$. It
  is a well-known fact (see, for example, \cite{HL01}) that $x^k$ is
  an unconstrained minimizer of $\max_{i=1,\dots,m} \{f_i(x) + \tau_k
  B_i(x)\}$ if and only if there exist positive scalars $\alpha_j =
  \alpha_j^{(k)}$, $j \in I(x^k)$, where
  \begin{displaymath}
    I(x^k) = \Big\{j
    \colon f_j(x^k) + \tau_k B_j(x^k) = \max_{i=1,\dots,m} \{f_i(x^k) +
    \tau_k B_i(x^k)\}\Big\},
  \end{displaymath}
  such that
  \begin{displaymath}
    \sum_{j \in I(x^k)} \alpha_j = 1 \,\,\, \mbox{ and }
    \sum_{i\in I(x^k)} \alpha_i \Big(\nabla f_i(x^k) + \tau_k \nabla
    B_i(x^k)\Big) = 0.
  \end{displaymath}
  Taking $\alpha_j = 0$ for all $j \in \{1,\dots,m\} \setminus
  I(x^k)$, we have
  \begin{equation}
    \label{Eq:5}
    \sum_{i=1}^m \alpha_i \Big(\nabla f_i(x^k) + \tau_k \nabla
    B_i(x^k)\Big) = 0. 
  \end{equation} 
  But, under our assumptions, this equality is also a necessary and
  sufficient optimality condition for the penalized scalar-valued
  convex function
  \begin{displaymath}
    x \mapsto \sum_{i=1}^m \alpha_i \Big(f_i(x) + \tau_k B_i(x)\Big) =
    \langle \alpha, f(x) \rangle  + \tau_k \hat B(x),
  \end{displaymath}
  where we used that $ B_i = \hat B$ for $i=1,\dots,m$, $\alpha
  :=(\alpha_1,\dots,\alpha_m)^\T$ and $\sum_{i=1}^m \alpha_i = 1$.
  That is to say,
  \begin{displaymath}
    x^k \in \argmin_{x \in \R^n} \langle \alpha, f(x) \rangle + \tau_k
    \hat B(x),
  \end{displaymath}
  which in turn means that $\{x^k\}$ can be obtained via the
  application of the classical (scalar) barrier method to the
  real-valued function $x \mapsto \langle \alpha, f(x) \rangle $, a
  weighted scalarization of the vector-valued objective $f$, with
  weighting vector given by $\alpha = \alpha^k \in \R^m_+$, using the
  scalar-valued barrier $\hat B(x)$ for $D$ and $\{\tau_k\}$ as the
  parameter sequence.  Of course, a priori, we do not know the
  nonnegative weights $\alpha_1,\dots,\alpha_m$ which add up one and
  satisfy the Lagrangian equation \eqref{Eq:5}.

\end{enumerate}

%=============================================================================

\section{Convergence analysis of MBM}
\label{sec:conv}

We begin this section by showing some of elementary properties of
sequences produced by both versions of MBM which will be needed in
the sequel.

% Since $s$-increasing functions are $w$-increasing, we just prove
% them for the weak version of the method.

\begin{lemma} 
  \label{L2}
  Let $\{x^k\} \subset \R^n$ be a sequence generated by the weak or
  strong version of MBM, implemented with a barrier function $B \colon
  D^o \to \R^m_+$, a parameters sequence $\{\tau_k\} \subset \R_{++}$
  and an auxiliary function $\Phi \colon \R^m \to \R$. Define $\Phi^*
  : = \inf _{x \in D} \Phi(f(x))$ and $\Phi_k: = \Phi(f(x^k) +
  \tau_kB(x^k))$ for all~$k$. Then, the following statements hold:
  \begin{itemize}
  \item[(a)] \label{L2(i)} For all $k = 1,2,\dots$, we have
   \begin{displaymath}
    \Phi^*  \le \Phi_{k+1} \le  \Phi_k.
   \end{displaymath}
 \item[(b)] \label{L2(ii)} There exists $\eta \in \R$ such that
   \begin{displaymath}
     \lim_{k \to \infty} \Phi_k = \eta.
   \end{displaymath}
   In particular, $\Phi^* \leq \eta$.      
 \end{itemize} 
\end{lemma}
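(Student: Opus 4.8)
The plan is to establish both inequalities in (a) by exploiting two structural features of the method: the nonnegativity of the barrier, $B(x) \in \R^m_+$ for all $x \in D^o$, and the order-preserving property~\eqref{w-ineq}, which holds because $\Phi$ is continuous and $w$-increasing. The $s$-increasing (strong) case is automatically covered, since an $s$-increasing function is also $w$-increasing. Part (b) will then drop out of the monotone convergence theorem.

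For the lower bound $\Phi^* \le \Phi_{k+1}$, I would first observe that $\tau_{k+1} B(x^{k+1}) \ge 0$, so $f(x^{k+1}) \le f(x^{k+1}) + \tau_{k+1} B(x^{k+1})$ componentwise, and~\eqref{w-ineq} then yields $\Phi(f(x^{k+1})) \le \Phi_{k+1}$. Since $x^{k+1} \in D^o \subseteq D$, the left-hand side is bounded below by $\Phi^* = \inf_{x \in D} \Phi(f(x))$, giving $\Phi^* \le \Phi(f(x^{k+1})) \le \Phi_{k+1}$.

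For the descent inequality $\Phi_{k+1} \le \Phi_k$, the idea is to bridge the two iterates through the intermediate value obtained by evaluating the older iterate $x^k$ at the newer parameter $\tau_{k+1}$. Because $x^{k+1}$ solves the subproblem~\eqref{sup_prob} at step $k+1$ and $x^k \in D^o$ is feasible for it, optimality gives $\Phi_{k+1} \le \Phi\big(f(x^k) + \tau_{k+1} B(x^k)\big)$. Separately, $\tau_{k+1} < \tau_k$ together with $B(x^k) \ge 0$ forces $f(x^k) + \tau_{k+1} B(x^k) \le f(x^k) + \tau_k B(x^k)$, so~\eqref{w-ineq} gives $\Phi\big(f(x^k) + \tau_{k+1} B(x^k)\big) \le \Phi_k$. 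Chaining the two inequalities yields $\Phi_{k+1} \le \Phi_k$.

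Finally, part (b) follows at once: by (a), $\{\Phi_k\}$ is a nonincreasing real sequence bounded below by $\Phi^*$, which is finite by hypothesis~\eqref{Hypot1}; hence it converges to some $\eta \in \R$, and letting $k \to \infty$ in $\Phi^* \le \Phi_k$ gives $\Phi^* \le \eta$. I do not expect any serious obstacle, as the argument is a faithful transcription of the scalar barrier-method descent estimate. The one point demanding care is the bookkeeping in the descent step: one must invoke the optimality of $x^{k+1}$ and the parameter monotonicity $\tau_{k+1} < \tau_k$ as two distinct facts, routed through the auxiliary value $\Phi\big(f(x^k) + \tau_{k+1} B(x^k)\big)$, since conflating $\tau_k$ and $\tau_{k+1}$ would invalidate the estimate.
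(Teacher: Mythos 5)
Your proposal is correct and follows essentially the same argument as the paper: both chain $\Phi^* \le \Phi(f(x^{k+1})) \le \Phi_{k+1} \le \Phi\big(f(x^k)+\tau_{k+1}B(x^k)\big) \le \Phi_k$ using the nonnegativity of $B$, the property~\eqref{w-ineq}, the optimality of $x^{k+1}$ in~\eqref{sup_prob}, and $\tau_{k+1} < \tau_k$, then deduce (b) from monotone boundedness via~\eqref{Hypot1}. The only cosmetic difference is that you bound $\Phi^* \le \Phi(f(x^{k+1}))$ directly from $D^o \subseteq D$, whereas the paper routes this step through the equality of infima in~\eqref{Hypot1}; both are valid.
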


\begin{proof} 
  \begin{itemize} 
  \item[(a)] Using the definition of $\Phi_k$ and the fact that $x^k \in
    D^o$ for all $k$, we obtain
    \begin{align*}
      \Phi^*   
      & =  \inf_{x \in D} \Phi(f(x)) \\
      & =  \inf_{x \in D^o} \Phi(f(x)) \\
      & \le \Phi(f(x^{k+1})) \\
      & \le  \Phi(f(x^{k+1}) + \tau_{k+1}B(x^{k+1})) \\
      & = \Phi_{k+1} \\
      & = \min_{x \in D^o} \Phi\big(f(x) + \tau_{k+1} B(x)\big) \\
      & \le \Phi\big(f(x^k) + \tau_{k+1} B(x^k)\big) \\
      & \le \Phi\big(f(x^k) + \tau_k B(x^k)\big) \\ 
      & = \Phi_k,
    \end{align*} 
    where the second equality follows from~\eqref{Hypot1}, the second
    and the last inequalities follow from the facts that $\Phi$ is
    w-increasing, $0 \le B(x)$ for any $x$, $0 < \tau_{k+1} < \tau_k$
    for all $k$ and~\eqref{w-ineq}. Note that this proof holds for
    $\Phi$ weakly or strongly increasing.
  \item[(b)] By item~(a) and~\eqref{Hypot1}, we have that $\big
    \{\Phi_k\big \} \subset \R$ is a nonincreasing bounded from below
    sequence, so, as $k \to \infty$, it converges to some $\eta \in
    \R$.
  \end{itemize} 
\end{proof}

Now, we state and prove an extension of a classical convergence result
for real-valued optimization, which establishes that accumulation
points, if any, of a sequence generated by the barrier method are
optima for the original constrained minimization problem.
 
\begin{theorem}    
  \label{T1} 
  Let $\{x^k\} \subset \R^n$ be a sequence generated by the weak or
  strong version of~MBM, implemented with an
  auxiliary function $\Phi \colon \R^m \to \R$, a multiobjective
  barrier function $B \colon D^o \to \R^m_+$ and a
  sequence of parameters $\{\tau_k\} \subset \R_{++}$. If $\bar x \in
  D$ is an accumulation point of $\{x^k\}$, then $\bar x \in
  \argmin_{x \in D} \Phi(f(x))$. Moreover, if $\Phi$ is
  $w$-increasing, then~$\bar x$ is a weak Pareto optimum for problem
  \eqref{eq:prob}; if~$\Phi$ is $s$-increasing, then~$\bar x$ is a
  Pareto optimal solution for~\eqref{eq:prob}.
\end{theorem}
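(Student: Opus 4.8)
The plan is to reduce everything to the single scalar claim that $\bar x \in \argmin_{x \in D} \Phi(f(x))$, i.e.\ that $\Phi(f(\bar x)) = \Phi^*$, since the ``Moreover'' part then follows immediately from Lemma~\ref{L1}: once $\bar x$ minimizes $\Phi \circ f$ on $D$, part~(a) yields weak Pareto optimality when $\Phi$ is $w$-increasing, and part~(b) yields Pareto optimality when $\Phi$ is $s$-increasing. So the whole argument concentrates on that equality. Throughout I would fix a subsequence $\{x^{k_\ell}\}$ with $x^{k_\ell} \to \bar x$, and carry along the two facts from Lemma~\ref{L2}: the sequence $\{\Phi_k\}$ is nonincreasing with $\Phi_k \to \eta$, and $\Phi^* \le \eta$. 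Since $\bar x \in D$ trivially gives $\Phi(f(\bar x)) \ge \Phi^*$, it remains only to establish the reverse inequality $\Phi(f(\bar x)) \le \Phi^*$.

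The core step, and the one I expect to be the main obstacle, is to upgrade the inequality $\Phi^* \le \eta$ from Lemma~\ref{L2} to the equality $\eta = \Phi^*$. This is exactly where the interior-penalty structure and hypothesis~\eqref{Hypot1} enter. Fix $\epsilon > 0$. By~\eqref{Hypot1} we have $\Phi^* = \inf_{x \in D^o} \Phi(f(x))$, so there is a point $\hat x \in D^o$ with $\Phi(f(\hat x)) < \Phi^* + \epsilon$. The point of choosing an interior $\hat x$ is that $B(\hat x)$ is then a fixed, finite vector. Since $x^k$ solves~\eqref{sup_prob}, comparing the minimizer against this fixed competitor gives
\begin{displaymath}
  \Phi_k = \min_{x \in D^o} \Phi\big(f(x) + \tau_k B(x)\big) \le \Phi\big(f(\hat x) + \tau_k B(\hat x)\big).
\end{displaymath}
Letting $k \to \infty$ and using $\tau_k \to 0$ together with the continuity of $\Phi$ yields $\eta \le \Phi(f(\hat x)) < \Phi^* + \epsilon$. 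As $\epsilon > 0$ is arbitrary, $\eta \le \Phi^*$, and combined with Lemma~\ref{L2}(b) this gives $\eta = \Phi^*$. I would emphasize that no case split on whether $\bar x$ lies in $D^o$ or on $\partial D$ is needed here: the competitor $\hat x$ is interior by construction, so its barrier term vanishes in the limit regardless of where $\bar x$ sits.

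With $\eta = \Phi^*$ in hand, the closing step is short. Because $\tau_k > 0$ and $B(x^k) \ge 0$, we have $f(x^k) \le f(x^k) + \tau_k B(x^k)$, and since $\Phi$ is continuous and $w$-increasing, \eqref{w-ineq} gives $\Phi(f(x^k)) \le \Phi_k$ for every $k$. Evaluating along the convergent subsequence and invoking the continuity of $\Phi \circ f$,
\begin{displaymath}
  \Phi(f(\bar x)) = \lim_{\ell \to \infty} \Phi(f(x^{k_\ell})) \le \lim_{\ell \to \infty} \Phi_{k_\ell} = \eta = \Phi^*.
\end{displaymath}
Together with $\Phi(f(\bar x)) \ge \Phi^*$ this proves $\Phi(f(\bar x)) = \Phi^*$, so $\bar x \in \argmin_{x \in D} \Phi(f(x))$, and Lemma~\ref{L1} finishes the argument. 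The only genuinely delicate point is the upper bound $\eta \le \Phi^*$ of the middle paragraph; everything else is continuity plus the monotonicity relation~\eqref{w-ineq}, both of which hold uniformly for the weak and the strong versions of the method.
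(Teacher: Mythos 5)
Your proof is correct and follows essentially the same route as the paper's: both arguments first upgrade $\Phi^* \le \eta$ to $\eta = \Phi^*$ by comparing $\Phi_k$ against a fixed interior competitor whose barrier value is finite and letting $\tau_k \to 0$, and then use the nonnegativity of $B$ together with~\eqref{w-ineq} and continuity to conclude $\Phi(f(\bar x)) \le \eta = \Phi^*$, finishing with Lemma~\ref{L1}. The only difference is stylistic: you argue directly (via an $\epsilon$-argument and a limit inequality) where the paper runs both steps as proofs by contradiction, which is a clean but not substantively different presentation.
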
  

\begin{proof} Let us first consider the case in which MBM is
  implemented with a weakly-increasing auxiliary function $\Phi$.
  From Lemma~\ref{L2}, there exists a real number $\eta \in \R$ such
  that $\Phi_k \to \eta$, where $\Phi_k = \Phi(f(x^k) + \tau_k
  B(x^k))$. As in the same lemma, call $\Phi^* = \inf _{x \in D}
  \Phi(f(x))$. We claim that $\Phi^* = \eta$.  If not, then, again by
  Lemma~\ref{L2}, we have
  \begin{equation}
    \label{ineq} 
    \Phi^* < \eta.
  \end{equation}
  Since, by~\eqref{Hypot1}, $\Phi^* = \inf_{x \in D}\Phi(f(x)) =
  \inf_{x \in D^o} \Phi(f(x))$ and $\Phi^* < \eta$, there exists
  $\tilde x \in D^o$ such that $\Phi^* < \Phi(f(\tilde x)) <
  \eta$. Hence, from the continuity of $\Phi$ and the fact that
  $\tau_k \to 0$, for $k$ large enough we have that
  \[
  \Phi(f(\tilde x) + \tau_k B(\tilde x)) < \eta.
  \]
  From the fact that $\tilde x \in D^o$, it follows that $\Phi_k =
  \min_{x \in D^o} \Phi (f(x) + \tau_kB(x)) \leq \Phi(f(\tilde x) +
  \tau_k B(\tilde x))$, which combined with the above inequality leads
  to
  \[
  \Phi_k < \eta
  \]
  for $k$ large enough, in contradiction with the results of
  Lemma~\ref{L2}.

  Whence \eqref{ineq} does not hold and so
  \begin{equation}
    \label{eq1}
    \Phi^* = \eta. 
  \end{equation}
  Now let $\{x^{k_j}\}$ be a subsequence of $\{x^k\}$ such that
  $\lim_{j \to \infty} x^{k_j} = \bar x.$ Since $x^{k_j} \in D^o$ for
  all $j=1,2,\dots$, we have $g(x^{k_j}) < 0$ for all $j$ and, by the
  continuity of $g$, $g(\bar x) \le 0$, i.e., $\bar x \in D$. Assume
  that $\bar x \notin \argmin_{x \in D} \Phi(f(x))$; from the
  feasibility of $\bar x$ and the definition of $\Phi^*$, this means
  that
  \begin{equation}
    \label{ineq2}
    \Phi(f(\bar x)) > \Phi^*. 
  \end{equation}
  From \eqref{eq1} we get that
  \begin{equation}
    \label{lim1}
    \lim_{j\to \infty} \Phi(f(x^{k_j}) + \tau_{k_j}B(x^{k_j})) -
    \Phi^* = \lim_{j \to \infty}  \Phi_{k_j} -
    \Phi^* = \eta - \Phi^* = 0.
  \end{equation}
  But, combining the fact that
  $\tau_{k_j}B(x^{k_j}) \ge 0$ for all $j=1,2,\dots$ with~\eqref{w-ineq}, and using the
  continuity of $\Phi(f(\cdot))$, the definition of $\Phi^*$,
  as well as~\eqref{ineq2}, we also get
  \begin{equation}
    \lim_{j\to \infty} \Phi(f(x^{k_j}) + \tau_{k_j}B(x^{k_j})) -
    \Phi^* \ge \lim_{j\to \infty} \Phi(f(x^{k_j})) -
    \Phi^* = \Phi(f(\bar x))  - \Phi^* > 0,
  \end{equation}
  which contradicts~\eqref{lim1}.
  
  Hence~\eqref{ineq2} is not true and so $\Phi(f(\bar x)) = \Phi^* =
  \inf_{x \in D} \Phi(f(x))$, that is to say $\bar x \in \argmin_{x\in
    D}\Phi(f(x))$ and the result follows from item~(a) of
  Lemma~\ref{L1}. The proof for the case in which MBM is implemented
  with an $s$-increasing $\Phi$ is almost identical, and it concludes
  using item~(b) of Lemma~\ref{L1}.
\end{proof} 

The following result establishes that both versions of MBM are
convergent whenever $\Phi \circ f := \Phi(f(\cdot))$ has a strict
minimizer in $D$, which happens, for instance, if this composition is
strictly convex and coercive.

\begin{corollary} 
  \label{cor:gen_mbm}
  Let $\{x^k\} \subset \R^n$ be generated by the weak or strong
  version of MBM, with auxiliary function $\Phi \colon \R^m \to \R$
  such that $x \mapsto \Phi\big(f(x)\big)$ has a strict
  minimizer~$\bar x$ in~$D$, i.e., $\argmin_{x \in D} \Phi(f(x)) =
  \{\bar x\}$. If we assume that $\{x^k\}$ has an accumulation point,
  then $x^k \to \bar x$. Furthermore, if $\Phi$ is $w$-increasing,
  then $\bar x$ is a weak Pareto optimal solution for~\eqref{eq:prob};
  if $\Phi$ is $s$-increasing, then $\bar x$ is a Pareto optimum
  for~\eqref{eq:prob}.
\end{corollary}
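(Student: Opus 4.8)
The plan is to deduce the corollary from Theorem~\ref{T1} together with the elementary fact that a sequence in~$\R^n$ possessing a single accumulation point and no subsequence escaping to infinity must converge to that point.

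First I would apply Theorem~\ref{T1} verbatim. Its proof shows that any accumulation point of $\{x^k\}$ lies in $D$ and belongs to $\argmin_{x \in D}\Phi(f(x))$, which by hypothesis is the singleton $\{\bar x\}$. Consequently $\bar x$ is the only possible accumulation point of the sequence, and since we are assuming that at least one accumulation point exists, it must be $\bar x$. To upgrade ``unique accumulation point'' to genuine convergence, I would argue by contradiction: if $x^k \not\to \bar x$, there are $\varepsilon > 0$ and a subsequence $\{x^{k_j}\}$ with $\norm{x^{k_j} - \bar x} \ge \varepsilon$ for all $j$. Were this subsequence to admit a convergent sub-subsequence, its limit would be an accumulation point of $\{x^k\}$ distinct from $\bar x$, contradicting the uniqueness just established. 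Thus the whole proof reduces to producing such a convergent sub-subsequence, i.e.\ to ruling out that $\{x^{k_j}\}$ runs off to infinity.

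This last point is the main obstacle, since a lone accumulation point does not by itself force convergence. I expect to close the gap with the descent information of Lemma~\ref{L2}. Because $B(x^k) \ge 0$ and $\tau_k > 0$, the vector inequality $f(x^k) \le f(x^k) + \tau_k B(x^k)$ combined with~\eqref{w-ineq} yields $\Phi(f(x^k)) \le \Phi_k$, while $x^k \in D^o \subseteq D$ gives $\Phi^* \le \Phi(f(x^k))$. Since the proof of Theorem~\ref{T1} establishes $\Phi_k \to \Phi^*$, the squeeze theorem forces $\Phi(f(x^k)) \to \Phi^*$. Hence the tail of $\{x^k\}$ eventually lives in sublevel sets of $\Phi \circ f$ of level arbitrarily close to the minimum; whenever $\Phi \circ f$ has bounded sublevel sets---precisely what coercivity of the composition, the sufficient condition singled out in the statement, provides---such a sublevel set is bounded, so the tail is bounded and $\{x^{k_j}\}$ admits a convergent sub-subsequence, completing the contradiction.

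Finally, the Pareto and weak Pareto conclusions are immediate from the convergence $x^k \to \bar x$: applying the last assertion of Theorem~\ref{T1} to the accumulation point $\bar x$ gives weak Pareto optimality when $\Phi$ is $w$-increasing and Pareto optimality when $\Phi$ is $s$-increasing.
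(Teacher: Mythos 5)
Your opening and closing steps follow the paper's own proof: by Theorem~\ref{T1} (together with the closedness of $D$, so that every accumulation point of $\{x^k\}\subset D^o$ lies in $D$), every accumulation point belongs to $\argmin_{x \in D}\Phi(f(x)) = \{\bar x\}$, and the weak Pareto/Pareto claims then follow from Lemma~\ref{L1}. The problem is your middle paragraph. The corollary assumes only that $\argmin_{x\in D}\Phi(f(x))$ is a singleton and that $\{x^k\}$ has \emph{some} accumulation point; coercivity of $\Phi\circ f$ is not among its hypotheses. The words ``strictly convex and coercive'' occur only in the sentence introducing the corollary, as a sufficient condition for the \emph{existence} of a strict minimizer, not as a standing assumption. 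Your argument uses bounded sublevel sets of $\Phi\circ f$ in an essential way (to extract a convergent sub-subsequence from the subsequence staying $\varepsilon$-away from $\bar x$), so what you prove is a variant of the corollary with an added coercivity hypothesis, not the corollary as stated. That is a genuine gap relative to the statement. (Your squeeze argument itself is sound: $\Phi^* \le \Phi(f(x^k)) \le \Phi_k$ by feasibility and~\eqref{w-ineq}, and $\Phi_k \to \Phi^*$ does follow from the first part of the proof of Theorem~\ref{T1} without any accumulation-point assumption.)

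That said, the obstacle you flagged is real, and it is precisely where the paper's own proof is thin: the paper writes that any subsequential limit of $\{x^k\}$ equals $\bar x$ ``and so $x^k \to \bar x$,'' treating ``unique accumulation point implies convergence'' as immediate, which is false for unbounded sequences in $\R^n$. Nothing in the corollary's hypotheses prevents a subsequence of $\{x^k\}$ from escaping to infinity. Indeed, one can construct instances already with $m=p=1$ and $\Phi = \mathrm{id}$: take $g \equiv -1$ (so $D = \Re$ and any continuous nonnegative $B$ is a barrier), let $f$ have baseline $\min\{|x|,1\}$ with narrow wells centered at $2^{-k}$ for odd $k$ and at $k$ for even $k$, dipping to depth $2^{-2k-1}$, let $B$ have baseline $1$ with matching wells dipping to $1-2^{-k}$, and set $\tau_k = 2^{-k}$; an elementary computation (via $2^{-2j-1}+2^{-2k-1} > 2^{-j-k}$ for $j\neq k$) shows the $k$-th subproblem's unique minimizer is the $k$-th well center, so $\{x^k\}$ has $\bar x = 0$ as its unique accumulation point while the even-indexed iterates diverge. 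Thus under the literal hypotheses the conclusion can fail; the correct repairs are either to add an explicit assumption (boundedness of $\{x^k\}$, or coercivity of $\Phi\circ f$, which as you note even makes the accumulation-point assumption superfluous) or to weaken the conclusion to ``every accumulation point of $\{x^k\}$ equals $\bar x$.'' What one cannot do is present the coercivity-based argument as a proof of the corollary exactly as stated.
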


\begin{proof}
  Let us just prove the case in which $\{x^k\}$ is generated by the
  weak version of MBM. From Theorem~\ref{T1} and the strict
  optimality of $\bar x \in D$, any subsequential limit of $\{x^k\}$
  is equal to $\bar x$ and so $x^k \to \bar x$, a weak Pareto optimal
  solution for~\eqref{eq:prob}.  The convergence result for the strong
  version of MBM also follows from Theorem~\ref{T1}.
\end{proof}

%=============================================================================

\subsection{Local implementation of MBM and its convergence}
\label{sec:loc}

We now sketch a practical implementation of both, strong and weak,
versions of MBM. Take $B \colon \R^n \to \R^m_+$, a barrier for $D$, a
$w$-increasing continuous auxiliary function $\Phi \colon \R^m \to
\R$, a decreasingly convergent to zero sequence of positive penalty
parameters $\{\tau_k\}$ and $V \subset \R^n$, a compact set with
nonempty interior (e.g., a closed ball with positive radius). Let
\begin{equation}
  \label{loc_sub_prob}
  x^k \in \argmin_{x \in D^o \cap {\rm int}(V)} \Phi\big(f(x) + \tau_k B(x)\big),
  \quad k = 1,2,\dots
\end{equation}
where int($V$) stands for the  topological interior of $V$. 

\noindent The strong version of this local implementation of the
method is formally identical to the weak one, but with a
$s$-increasing continuous auxiliary function $\Phi$ instead of a
$w$-increasing one.

From now on, we assume that
\begin{equation}
  \label{Hypot1Local}
  \inf_{x \in D^o \cap \, {\rm int} (V)} \Phi(f(x)) 
  = \inf_{x \in D \cap V} \Phi(f(x)) \in \R,
\end{equation}
where $\Phi$ is a strongly or weakly-increasing continuous function if
we are, respectively, seeking Pareto or weak Pareto optima.

Assuming the existence of a strict local minimizer of $\Phi(f(\cdot))$
within $D$ (i.e., a feasible $\bar x$ such that $\Phi(f(\bar x)) <
\Phi(f(x))$ for any feasible $x$ in a vicinity of $\bar x$), we will
prove that both the weak and the strong versions of the local MBM
are fully convergent to a weak Pareto and a Pareto optimal solutions,
respectively.

\begin{theorem} 
  \label{T2}
  Suppose that $\Phi \colon \R^m \to \R$ is a weakly or strongly
  increasing continuous function and that $\bar x \in D$ is a strict
  local minimizer of $\Phi(f(\cdot))$ in $D$, say $\bar x = \argmin_{x
    \in D \cap U} \Phi\big(f(x)\big)$, for some vicinity $U \subset
  \R^n$ of $\bar x$. Let $\{x^k\} \subset \R^n$ be a sequence
  generated by the local MBM version, implemented with a barrier
  function $B \colon \R^n \to \R^m_+$, a parameters sequence
  $\{\tau_k\} \subset \R_{++}$, the auxiliary function $\Phi$ and $V$,
  a compact subset of $U$ with nonempty interior. Then $x^k \to \bar
  x$ and, moreover, $\bar x$ is a weak Pareto optimal solution for
  problem~\eqref{eq:prob}.  If $\Phi$ is a $s$-increasing auxiliary
  function, then $x^k \to \bar x$ and $\bar x$ is a Pareto optimum for
  problem \eqref{eq:prob}.
\end{theorem}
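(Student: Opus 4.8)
The plan is to replicate the machinery of Lemma~\ref{L2} and Theorem~\ref{T1} almost verbatim, but with the feasible region $D$ and its interior $D^o$ replaced throughout by their localizations $D \cap V$ and $D^o \cap {\rm int}(V)$, and with~\eqref{Hypot1} replaced by its local counterpart~\eqref{Hypot1Local}. Accordingly, I would set $\Phi^* := \inf_{x \in D \cap V} \Phi(f(x))$ and $\Phi_k := \Phi(f(x^k) + \tau_k B(x^k))$, and first establish the local analogue of Lemma~\ref{L2}: the chain of inequalities proving part~(a) goes through once every $\inf_{x\in D}$ and $\min_{x \in D^o}$ is read as $\inf_{x \in D \cap V}$ and $\min_{x \in D^o \cap {\rm int}(V)}$, since each $x^k$ lies in $D^o \cap {\rm int}(V)$ and the only facts used are the monotonicity of $\Phi$, $0 \le B$, and $0 < \tau_{k+1} < \tau_k$. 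This yields that $\{\Phi_k\}$ is nonincreasing and bounded below by $\Phi^*$, hence converges to some $\eta \ge \Phi^*$.

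Next I would rerun the argument of Theorem~\ref{T1}. To get $\eta = \Phi^*$, suppose $\Phi^* < \eta$; by~\eqref{Hypot1Local} the infimum of $\Phi \circ f$ over $D^o \cap {\rm int}(V)$ equals $\Phi^*$, so there is $\tilde x \in D^o \cap {\rm int}(V)$ with $\Phi(f(\tilde x)) < \eta$, and continuity of $\Phi$ together with $\tau_k \to 0$ forces $\Phi(f(\tilde x) + \tau_k B(\tilde x)) < \eta$ for large $k$; since $\tilde x$ is admissible for the subproblem~\eqref{loc_sub_prob}, $\Phi_k \le \Phi(f(\tilde x) + \tau_k B(\tilde x)) < \eta$, contradicting the local Lemma~\ref{L2}. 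Here enters the one genuinely favorable new feature: since $\{x^k\} \subset {\rm int}(V) \subset V$ and $V$ is compact, the sequence automatically possesses an accumulation point, so, unlike in Theorem~\ref{T1}, no extra hypothesis is needed. For any accumulation point $\bar x'$, continuity of $g$ gives $g(\bar x') \le 0$ and closedness of $V$ gives $\bar x' \in V$, so $\bar x' \in D \cap V$; the limiting argument of Theorem~\ref{T1} (using $\tau_{k_j} B(x^{k_j}) \ge 0$, \eqref{w-ineq}, continuity of $\Phi \circ f$, and $\eta = \Phi^*$) then shows $\Phi(f(\bar x')) = \Phi^*$, i.e. $\bar x' \in \argmin_{x \in D \cap V} \Phi(f(x))$.

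To upgrade this to full convergence I would invoke the strict local minimality. Since $V$ is a compact subset of $U$ containing $\bar x$ and $D \cap V$ is compact, $\Phi \circ f$ attains its minimum over $D \cap V$ at $\bar x$, and strictness over $D \cap U \supseteq D \cap V$ makes $\bar x$ the \emph{unique} minimizer, so $\argmin_{x \in D \cap V} \Phi(f(x)) = \{\bar x\}$. Hence every accumulation point equals $\bar x$, and a sequence lying in the compact set $V$ all of whose accumulation points coincide must converge; thus $x^k \to \bar x$. Finally, for the optimality conclusion I would argue as in Lemma~\ref{L1}, but locally on the vicinity $U$: if $\Phi$ is $w$-increasing and some $x \in D \cap U$ satisfied $f(x) < f(\bar x)$, monotonicity would give $\Phi(f(x)) < \Phi(f(\bar x))$, contradicting that $\bar x$ minimizes $\Phi \circ f$ on $D \cap U$; this makes $\bar x$ a (local) weak Pareto optimum, and replacing $<$ by the Pareto relation together with $s$-increasingness gives the Pareto case.

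The main obstacle is the step identifying the accumulation point with $\bar x$: it is essential that $\bar x$ actually belong to $V$, indeed to ${\rm int}(V)$, so that $\bar x$ is approachable from within $D^o \cap {\rm int}(V)$ and \eqref{Hypot1Local} is consistent with $\Phi^* = \Phi(f(\bar x))$; otherwise the minimizer of $\Phi \circ f$ over $D \cap V$ need not be the prescribed strict local minimizer and the uniqueness argument collapses. I would therefore make explicit the natural choice $\bar x \in {\rm int}(V)$ (for instance, $V$ a small closed ball centered at $\bar x$), under which every step above is routine; the remainder is merely a transcription of the global proofs with $D$ and $D^o$ localized to $D \cap V$ and $D^o \cap {\rm int}(V)$.
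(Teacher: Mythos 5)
Your proposal is correct and takes essentially the same route as the paper's own proof: a localized Lemma~\ref{L2} built on~\eqref{Hypot1Local}, the argument of Theorem~\ref{T1} rerun over $D \cap V$ and $D^o \cap \mathrm{int}(V)$, compactness of $V$ to guarantee accumulation points, identification of every accumulation point with $\bar x$ via strict minimality, and Lemma~\ref{L1} for the Pareto-type conclusion. Your closing caveat is moreover a genuine catch rather than pedantry: the paper's step ``we conclude that $\tilde x = \bar x$'' silently requires $\bar x \in V$ (unique minimality of $\bar x$ over $D \cap U$ says nothing about $\argmin_{x \in D \cap V} \Phi(f(x))$ when $V$ excludes $\bar x$), a hypothesis missing from the theorem statement though clearly intended --- elsewhere the paper takes $V$ to be a compact vicinity of the minimizer --- so your explicit requirement $\bar x \in \mathrm{int}(V)$, like your careful downgrade of the conclusion to \emph{local} (weak) Pareto optimality when Lemma~\ref{L1} is applied with $D \cap U$ in place of $D$, is the faithful repair.
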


\begin{proof}
  Since $V$ is compact, the sequence $\{x^k\} \subset V$ has an
  accumulation point $\tilde x \in V$. Note that
  condition~\eqref{Hypot1Local} allows us to prove a local version of
  Lemma~\ref{L2}, so as in the proof of Theorem~\ref{T1}, we see that
  $\tilde x \in \argmin_{x \in D \cap V} \Phi(f(x))$. Since $\bar x$
  is the unique minimizer of $\Phi(f(\cdot))$ within $D \cap U$ and $V
  \subset U$, we conclude that $\tilde x = \bar x$; therefore, the
  unique accumulation point of $\{x^k\}$ is $\bar x$ and the proof is
  complete. Now, from Lemma~\ref{L1}, $\bar x$ is a weak Pareto or a Pareto
  optimum, whether $\Phi$ is, respectively, a $w$-increasing or a
  $s$-increasing auxiliary function.
\end{proof}

Local convergence may also be true under a weaker condition than the
existence of a strict local minimizer, namely whenever $S_U : =
\argmin_{x \in D \cap U} \Phi(f(x))$ is an isolated set of $S : =
\bigcup_{W \subset \R^n} S_W$, where $ S_W : = \{ \bar x \in \R^n \colon
\Phi(f(\bar x)) = \inf_{x \in D \cap W} \Phi(f(x))\}$, with $W$
compact subset of $\R^n$ such that int$(W) \neq \emptyset$, which
means that there exists a closed set $G \subset \R^n$, such that
$\emptyset \neq S_U \subset$ int$(G)$ and $G \setminus S_U \subset
\R^n \setminus S$.

%=============================================================================

\section{MBM applications}
\label{sec:app}

Let us now show a very simple \emph{ad hoc} instance of
problem~\eqref{eq:prob} where the weighting method, a quite popular
strategy for solving multicriteria problems, fails to furnish optima
in an arbitrary large set of weights, while MBM provides an optimal
solution.

Recall that the weighting method consists of minimizing a convex
combination of the objective components on the feasible set. So for $m
= 2$, it just requires a single weight $\alpha \in [0,1]$ (the other
one is $1 - \alpha$). For a given tolerance $\varepsilon \in (0,1)$,
we will show that, when applied to~\eqref{eq:prob} for a certain $f =
(f_1,f_2)^\T$ and a particular constraint set~$D$, this method fails to
have optimal solutions in a set of parameters whose length is greater
than $1 - \varepsilon$.

\begin{example}
  Let $\varepsilon \in (0,1)$. Consider $f \colon \Re \to \Re^2$ be
  given by $f(t) := (t,-a t)^\T$, where $a >
  \frac{1-\varepsilon}{\varepsilon}$, with $D := \{t \in \R \colon t
  \geq 0\}$.  So, the multiobjective problem can be written as
  \begin{equation}
    \label{ex_prob} 
    \min_{-t \leq 0} \, f(t). 
  \end{equation}
  The set of Pareto (weak or not) optima is $S = D = [0,+\infty)$.
  The weighting method prescribes to solve the following scalar
  optimization problem:
  \[
  \min_{t \geq 0} \, \big\langle (\alpha,1-\alpha), (t,-a t) \big\rangle
  = \min_{t \geq 0} \, [\alpha (1 + a) - a]t.
  \]
  Clearly, the above problem has an optimal solution if, and only if, $
  \alpha (1 + a) - a \geq 0$, which, in turn, is equivalent to $\alpha
  \geq \frac{a}{1+a}$. This means, that the weighting method works
  properly only in an interval with length $1 - \frac{a}{1+a} =
  \frac{1}{1 + a}$.

  Therefore, since $a > \frac{1-\varepsilon}{\varepsilon}$, the
  weighting method gives us an optimum in an interval of weights of
  length smaller than $\varepsilon > 0$ or, in other words, it fails
  taking the parameter~$\alpha$ in a subinterval of $[0,1]$ with
  length greater than $1 - \varepsilon$.

  On the other hand, if we implement MBM with $B \colon D^o \to
  \R^2_+$, given by $B(t) : = (\frac{1}{t},\frac{1}{t})^\T$, where
  $D^o = \{t \in \R \colon t > 0\}$, $\tau_k = \frac{1}{k}$ for all $k
  \in \mathbb{N}$ and $\Phi \colon \R^2 \to \R$, given by $\Phi(u) :=
  \max\{u_1,u_2\}$, it is easy to see that
  \[
  t_k : = k^{-1/2} \in \argmin_{t \in D^o} \, \Phi\big(f(t) + \tau_k
  B(t)\big) = \argmin_{t \in D^o} \max \left\{ t+ \frac{1}{kt}, -a t +
  \frac{1}{kt} \right\} =t + \frac{1}{kt} \,.
  \] 
  Note that $\Phi(f(t)) = \max \{t,-at\} = t$ for all $t \geq 0$, so
  we have that $\argmin_{t \in D} \, \Phi(f(t)) = \{0\}$. Since
  $-\infty < 0 = \inf_{t \in D} \, \Phi(f(t)) = \inf_{t \in D^o} \,
  \Phi(f(t))$ and also $t_k \to 0$, by Corollary~\ref{cor:gen_mbm}, we
  have that MBM furnishes $t^* = 0$, an optimal point for
  problem~\eqref{ex_prob}.
\end{example}

We now exhibit a very simple instance of problem~\eqref{eq:prob}, for
which there exists a family of auxiliary functions
$\{\Phi_\omega\}_{\omega \in \Omega}$, $\Omega \subset \R^m$, such
that, using local MBM implemented with these functions, any
multiobjective barrier function for the feasible set and any positive
and decreasing to zero parameters sequence, by varying $\omega \in
\Omega$, we can retrieve the whole optimal set.

First, observe that, for any $\omega \in \R^m$, the function
$\Phi_\omega \colon \R^m \to \R$, given by $\Phi_\omega (u) :=
\max_{i=1,\dots,m}\{u_i + \omega_i\}$ is $w$-increasing and
continuous, so it can be used as an auxiliary function.

\begin{example}
  Consider $n=1$, $m=2$, $D =[-2,+\infty)$ and $f \colon \R \to \R^2$
  defined by $f_1(t) := t^2 +1$, $f_2(t) := t^2-2t+1$. Clearly, in the
  interval $[0,1]$, whenever $f_2$ decreases, $f_1$ increases and, in
  $D$, this happens only in this interval; that is to say, $[0,1]$ is
  the optimal set for problem~\eqref{eq:prob} with these data.
  
  We will apply MBM with $\Phi_\omega(u) = \max_{i=1,2}\{u_i +
  \omega_i\}$, $\omega \in \R^2$, a continuous $w$-increasing
  function.  First of all, note that $f_1(t) \le f_2(t)$ if and only
  if $t \le 0$, so $t \mapsto \Phi_\omega(f(t))$, with $\omega =
  (0,0)^\T$, has a strict minimizer at $t = 0$. Now, let us investigate
  $\argmin_{t \in D} \Phi_\omega(f(t))$ not just for $\omega =
  (0,0)^\T$, but for $\omega_\alpha : = (\alpha,0)^\T$, with $\alpha \in
  [-2,0]$.

  It is easy to see that $x \mapsto \Phi_{\omega_\alpha}(f(x))$ has a
  unique minimizer in $[0,1]$ at the sole point $t_\alpha \in [0,1]$
  where $f_1(t_\alpha) +\alpha = f_2( t_\alpha)$, that is to say
  $t_\alpha = -\frac{\alpha}{2}$.  Therefore,
  \[
  \bigcup_{-2 \leq \alpha \leq 0} \argmin_{t \in D}
  \Phi_{\omega_\alpha} (f(t)) = [0,1].  
  \]
  Applying Theorem~\ref{T2}, with the auxiliary function
  $\Phi_{\omega_\alpha}$, where $\omega_\alpha \in \Omega : = [-2,0]
  \times \{0\}$, any multiobjective barrier function $B$ for $D$, as
  well as any $V_\alpha \subset \R$ compact vicinity of $t_\alpha$ and
  any parameter sequence $\{\tau_k\} \subset \R_{++}$ such that
  $\tau_{k+1} < \tau_k$ and $\tau_k \to 0$, the generated sequence
  $\{t^k\}$ converges to $t_\alpha$.  This means that, using MBM
  with all  auxiliary functions of the family
  $\{\Phi_{\omega_\alpha}\}_{\alpha \in [-2,0]}$, we retrieve the
  whole optimal set of the multiobjective problem $\min_{-t \leq 2}
  f(t) = (t^2+1, t^2-2t+1)^\T$.

  Of course, this is also an {\it ad hoc} example, but it may be
  useful in order to investigate when do we have auxiliary functions
  families such that by varying the parameters we can obtain the whole
  optimal set by means of the sequences produced by MBM.
\end{example}

%=============================================================================

\section{Final remarks}
\label{sec:final}

In this work we developed an extension for the vector-valued setting
of the classical internal penalty method for single optimization. As
expected, the convergence results are generalizations of those which
hold when the method is applied to scalar problems. For future
research, we leave the study of conditions which guarantee the
existence of auxiliary functions families such that by varying the
parameters, the whole Pareto (weak or not) frontier can be generated
applying MBM implemented with all those functions. Example 5.1
together with Theorem 4.4 may shed some light on that matter. Indeed,
the fact that MBM only converges to minimizers of the scalar
representations induced by the auxiliary functions suggest that this
apparent drawback of the method can be useful in order to study
necessary or sufficient conditions for the existence of such families.
Another interesting thing to analyze in the multicriteria setting is
the developing of mixed interior-exterior penalty methods. Finally,
the existence of a generalization of MBM to the vector optimization
case, or even to the variable order setting, may deserve some
attention.

%=============================================================================

%\section*{Acknowledgement(s)}

%We would like to thank the anonymous referees for their suggestions
%which improved the original version of the paper. 

%=============================================================================

\bibliographystyle{plain}
\bibliography{references}

%=============================================================================

\end{document}